\begin{document}
\newtheorem{theorem}{Theorem}
\newtheorem{lemma}[theorem]{Lemma}
\newtheorem{definition}[theorem]{Definition}
\newtheorem{conjecture}[theorem]{Conjecture}
\newtheorem{proposition}[theorem]{Proposition}
\newtheorem{algorithm}[theorem]{Algorithm}
\newtheorem{corollary}[theorem]{Corollary}
\newtheorem{question}{Question}
\newtheorem{observation}[theorem]{Observation}
\newtheorem{problem}[theorem]{Open Problem}
\newcommand{\noin}{\noindent}
\newcommand{\ind}{\indent}
\newcommand{\al}{\alpha}
\newcommand{\om}{\omega}
\newcommand{\pp}{\mathcal P}
\newcommand{\ppp}{\mathfrak P}
\newcommand{\R}{{\mathbb R}}
\newcommand{\N}{{\mathbb N}}
\newcommand\eps{\varepsilon}
\newcommand{\E}{\mathbb E}
\newcommand{\Prob}{\mathbb{P}}
\newcommand{\pl}{\textrm{C}}
\newcommand{\dang}{\textrm{dang}}
\renewcommand{\labelenumi}{(\roman{enumi})}
\newcommand{\bc}{\bar c}
\newcommand{\A}{\mathcal A}
\newcommand{\B}{\mathcal B}
\newcommand{\K}{\mathcal K}
\newcommand{\sn}{\mathrm{sn} }
\newcommand{\Av}{\mathrm{Av} }
\newcommand{\G}{\mathcal G}

\title{Sparse graphs are not flammable}

\author{Pawe\l{} Pra\l{}at}
\address{Department of Mathematics, Ryerson University, Toronto, ON, Canada, M5B 2K3}
\email{\texttt{pralat@ryerson.ca}}
\thanks{The author gratefully acknowledges support from MPrime, NSERC, and Ryerson University}

\keywords{Firefighter, surviving rate, random regular graphs}
\subjclass{
05C57  	
05C80  	
}

\maketitle

\begin{abstract}
In this paper, we consider the following \emph{$k$-many firefighter problem} on a finite graph $G=(V,E)$. Suppose that a fire breaks out at a given vertex $v \in V$. In each subsequent time unit, a firefighter protects $k$ vertices which are not yet on fire, and then the fire spreads to all unprotected neighbours of the vertices on fire. The objective of the firefighter is to save as many vertices as possible.

The surviving rate $\rho_k(G)$ of $G$ is defined as the expected percentage of vertices that can be saved when a fire breaks out at a uniformly random vertex of $G$. Let $\tau_k = k+2-\frac {1}{k+2}$. We show that for any $\eps >0$ and $k \ge 2$, each graph $G$ on $n$ vertices with the average degree at most $\tau_k-\eps$ is not flammable; that is, $\rho_k(G) > \frac {2\eps}{5\tau_k} > 0$. Moreover, a construction of a family of flammable random graphs is proposed to show that the constant $\tau_k$ cannot be improved.
\end{abstract}

\section{Introduction}

The following \emph{firefighter problem} on a finite graph $G=(V,E)$ was introduced by Hartnell at the conference in 1995~\cite{Hartnel}. Suppose that a fire breaks out at a given vertex $v \in V$. In each subsequent time unit, a firefighter protects one vertex which is not yet on fire, and then fire spreads to all unprotected neighbours of the vertices on fire. (Once a vertex is on fire or gets protected it stays in such state forever.) Since the graph is finite, at some point each vertex is either on fire or is protected by the firefighter, and the process is finished. (Alternatively, one can stop the process when no neighbour of the vertices on fire is unprotected. The fire will no longer spread.) The objective of the firefighter is to save as many vertices as possible. Today, over 15 years later, our knowledge about this problem is much greater and a number of papers have been published. We would like to refer the reader to the survey of Finbow and MacGillivray for more information~\cite{FM}.

We would like to focus on the following property. Let $\sn(G,v)$ denote the number of vertices in $G$ the firefighter can save when a fire breaks out at vertex $v \in V$, assuming the best strategy is used. The surviving rate $\rho(G)$ of $G$, introduced in~\cite{FHLS}, is defined as the expected percentage of vertices that can be saved when a fire breaks out at a random vertex of $G$ (uniform distribution is used for the initial placement), that is, $\rho(G) = \frac {1}{n^2} \sum_{v \in V} \sn(G,v)$. It is not difficult to see that for cliques $\rho(K_n) = \frac 1n$, since no matter where a fire breaks out only one vertex can be saved. For paths we get that
$$
\rho(P_n) = \frac {1}{n^2} \sum_{v \in V} \sn(G,v) = \frac {1}{n^2} \left( 2(n-1) + (n-2)(n-2) \right) = 1-\frac {2}{n} + \frac {2}{n^2}
$$
(one can save all but one vertex when a fire breaks out at one of the leaves; otherwise two vertices are burned). It is not surprising that a path can be easily protected, and in fact, all trees have this property. Cai, Cheng, Verbin, and Zhou~\cite{CCVZ} proved that  the greedy strategy of Hartnell and Li~\cite{HL} for trees saves at least $1 - \Theta(\log n / n)$ percentage of vertices on average for an $n$-vertex tree. Moreover, they managed to prove that for every outerplanar graph $G$, $\rho(G) \ge 1 - \Theta(\log n / n)$. Both results are asymptotically tight and improved earlier results of Cai and Wang~\cite{CW}. Note that there is no hope for similar result for planar graphs, since, for example, $\rho(K_{2,n}) = 2/(n+2) = o(1)$.

\bigskip

Let us stay focused on sparse graphs. It is clear that sparse graphs are easier to control so their surviving rates should be relatively large. Finbow, Wang, and Wang~\cite{FWW} showed that any graph $G$ with average degree strictly smaller than $8/3$ has the surviving rate bounded away from zero. Formally, it has been shown that any graph $G$ with $n \ge 2$ vertices and $m \le (\frac {4}{3}-\eps)n$ edges satisfies $\rho(G) \ge \frac {6\eps}{5} > 0$, where $0 < \eps < \frac{5}{24}$ is a fixed number. This result  was recently improved by the author of this paper to show that any graph $G$ with average degree strictly smaller than $30/11$ has the surviving rate bounded away from zero~\cite{P}.
\begin{theorem}[\cite{P}]\label{thm:main}
Suppose that graph $G$ has $n \ge 2$ vertices and average degree at most $\frac {30}{11}-\eps$ for some $0 < \eps < \frac{1}{2}$. Then, $\rho(G) \ge \frac {\eps}{30}$.
\end{theorem}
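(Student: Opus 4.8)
The aim is to bound $\rho(G)=\frac1{n^2}\sum_v\sn(G,v)$ from below by producing many fire–origins from which the firefighter can rescue a positive fraction of the vertices. I would call $v$ \emph{safe} if there is a connected set $H\ni v$ such that, starting a fire at $v$, the firefighter can protect the whole vertex–boundary $\partial H=N(H)\setminus H$ before the fire arrives; the explicit rule is to protect, in rounds $1,2,\dots$, the vertex of $\partial H$ not yet protected that is nearest to $v$, and this succeeds as soon as $|\{u\in\partial H:\mathrm{dist}_G(v,u)\le s\}|\le s$ for every $s\ge 1$, giving $\sn(G,v)\ge n-|H|$. A short list of \emph{good local configurations} forces such an $H$ to exist: (i) $\deg(v)\le 1$; (ii) $v$ lies on a cycle all of whose vertices have degree $2$, or inside a long enough induced path all of whose interior vertices have degree $2$; (iii) finitely many more delicate patterns in which $v$ has degree $2$ or $3$ and its immediate neighbourhood is sparse enough for the one–per–round schedule to keep pace with the fire. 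Let $W$ be the set of vertices lying in a good configuration; for $v\in W$ the strategy above yields $\sn(G,v)\ge\delta(v)\,n$ for an explicit $\delta(v)>0$, with $\delta(v)=1-o(1)$ for (i) and (ii).

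This reduces the theorem to a purely structural claim: if $\frac{2m}{n}\le\frac{30}{11}-\eps$ then $\sum_{v\in W}\sn(G,v)\ge\frac{\eps}{30}n^2$, and for this it is essentially enough to bound $|W|$ from below, which I would do by discharging. Give each vertex the charge $\deg(v)-\frac{30}{11}$, so the total is $2m-\frac{30}{11}n\le-\eps n$. The low–degree vertices outside $W$ are exactly those whose fire is \emph{funnelled} into the subgraph $D$ spanned by the vertices of degree $\ge 3$: they sit on short paths and small gadgets of degree-$\le 2$ vertices hanging off $D$. One designs rules shifting charge from $D$ — which carries surplus, at least $\frac{3}{11}$ per vertex and much more at higher degrees, never spending more than that surplus — out onto these appendages, so that every vertex not in $W$ finishes with charge $\ge 0$; since discharging preserves the total and vertices of $W$ only ever receive charge (hence stay $\ge-\frac{30}{11}$), the entire deficit then lands on $W$ and $|W|\ge\frac{11}{30}\eps n$ follows. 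Re-inserting the savings $\sn(G,v)\ge\delta(v)n$ for $v\in W$, the elementary bookkeeping then gives $\rho(G)\ge\frac{\eps}{30}$; the finitely many small $n$ — where even $\rho(K_n)=\frac1n$ can dip below $\frac{\eps}{30}$ — are treated directly, again using sparsity to locate a safe vertex by hand.

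The real work, and the step I expect to be the main obstacle, is the discharging together with the design of the configuration list it rests on, since this is precisely where the constant $\frac{30}{11}$ must be \emph{matched}, not merely approached. One must choose the good configurations generously enough that whatever escapes to the ``unsafe and dense'' side genuinely has edge-density at least $\frac{15}{11}$ per vertex, yet choose the rules so that a vertex of $D$ of degree $3$ or $4$ is never overloaded by the degree-$2$ appendages it carries; the awkward cases are cycles and very short degree-$2$ paths, appendages of length one or two, and a single high-degree vertex carrying several appendages. Getting this balance exactly right is what produces the value $\frac{30}{11}$ in place of the cruder $\frac{8}{3}$ obtainable from configuration (ii) alone, and it is mirrored on the other side by the type of (random) construction used elsewhere in the paper to show that thresholds of this form cannot be pushed further.
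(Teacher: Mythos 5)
First, a remark on scope: this paper does not prove Theorem~\ref{thm:main} --- it is quoted from~\cite{P} --- so the only internal point of comparison is the proof of Theorem~\ref{thm:lower_bound}, whose $k=1$ specialization gives only the weaker threshold $\tau_1 = 8/3 \approx 2.667$, not $30/11 \approx 2.727$; the paper explicitly notes that $30/11$ needs a finer argument. Your sketch correctly identifies the right framework: designate a set $W$ of fire-origins from which the firefighter provably saves a constant fraction of the graph, and discharge the quantity $\deg(v) - \frac{30}{11}$ to force $|W|$ to have positive density. The closing bookkeeping (non-$W$ vertices end with nonnegative charge, each $W$-vertex only receives and hence stays $\ge -\frac{30}{11}$, total charge is $\le -\eps n$, so $|W|\ge\frac{11}{30}\eps n$) is arithmetically sound.

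The genuine gap, which you flag yourself, is precisely where the content of the theorem lies. Item~(iii) of your configuration list is ``finitely many more delicate patterns,'' and your discharging rules are ``one designs rules shifting charge'' --- these are not omitted routine verifications, they \emph{are} the proof. The proof of Theorem~\ref{thm:lower_bound} in this paper shows what a completed version looks like: the safe set is pinned down exactly as $V_1 \cup V_2 \cup V_3$ (degree $\le k$; degree $k+1$ with a low-degree neighbour; degree $k+1$ next to a degree-$(k+2)$ vertex having two degree-$(k+1)$ neighbours), the rule is a single explicit transfer of $\frac{1}{k+2}$ from vertices of degree $\ge k+2$ to degree-$(k+1)$ neighbours in the residual set, and the three remaining degree classes $k+1$, $k+2$, $\ge k+3$ are each checked to end with charge at least $\tau_k$. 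Run at $k=1$, that argument already yields $8/3$ essentially for free; reaching $30/11$ requires enlarging the configuration list (which is why the author defers the $k=1$ case to~\cite{P}) and re-verifying the charge inequalities over a heavier case analysis. Until those configurations and rules are written out and the ``charge $\ge 0$ off $W$'' claim is verified in each case, nothing in your proposal explains why the critical constant is $30/11$ rather than some other number between $8/3$ and $3$, and the bound $\rho(G)\ge\frac{\eps}{30}$ is not established.
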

\noindent (Note that the goal was to show that the surviving rate is bounded away from zero, not to show the best lower bound for $\rho(G)$. The constant $\frac {1}{30}$ can be easily improved with more careful calculations.) 

On the other hand there are some dense graphs with large surviving rates (take, for example, a large collection of cliques). However, in~\cite{P} a construction of a family of sparse random graphs on $n$ vertices with the surviving rate tending to zero as $n$ goes to infinity is proposed. Hence the result is tight and the constant $\frac {30}{11}$ cannot be improved. 

\bigskip

In this paper, we study the following natural generalization of the problem. Let $k \in \N$. Suppose that the firefighter has now more resources, and can protect up to $k$ vertices which are not yet on fire, at each step of the process. Let $\sn_k(G,v)$ and $\rho_k(G)$ denote  the corresponding parameters of $\sn(G)$ and $\rho(G)$, respectively, when the firefighters protect $k$ vertices each time step. In particular, $\sn(G,v) = \sn_1(G,v)$ and $\rho(G)=\rho_1(G)$. We say that a family of graphs $\mathcal{G}$ is \emph{$k$-flammable} if for any $\eps > 0$ there exists $G \in \mathcal{G}$ such that $\rho_k(G) < \eps$. 

It turns out that the problem for $k \ge 2$ is easier than the $k=1$ case. In this paper, we prove that $\tau_k = k+2-\frac {1}{k+2}$ is the threshold in this case; that is, all graphs with average degree strictly less than $\tau_k$ are not $k$-flammable, but the family of graphs with average degree at least $\tau_k$ is $k$-flammable.

\begin{theorem}\label{thm:lower_bound}
Let $k \in \N$, $\tau_k = k+2 - \frac{1}{k+2}$, and $\eps > 0$. Suppose that the graph $G$ has $n \ge 2$ vertices and average degree at most $\tau_k-\eps$. Then, $\rho_k(G) \ge \frac {2\eps}{5 \tau_k}$.
\end{theorem}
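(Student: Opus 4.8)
The plan is to single out a fraction $\Omega(\eps)$ of ``good'' vertices from which the $k$ firefighters can confine the fire to at most three vertices, and then to average.

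First I would isolate the containment mechanism. Call a vertex $v$ \emph{tame} if there is an induced path $v=u_0,u_1,\dots ,u_\ell$ with $\ell\le 2$ such that $\deg(v)\le k+1$, every internal vertex has degree at most $k+2$, and $\deg(u_\ell)\le k+1$ (if $\ell =0$ we instead ask $\deg(v)\le k$). I claim $\sn_k(G,v)\ge n-3$ for every tame $v$: the firefighter simply ``chases'' the fire along the path, protecting at step $1$ every neighbour of $v$ except $u_1$ (at most $k$ of them), at step $2$ every unprotected not-yet-burning neighbour of the new front vertex $u_1$ except $u_2$, and at step $3$ every remaining neighbour of $u_2$ (at most $k$, because $\deg(u_2)\le k+1$ and $u_1$ is already burning). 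Since the path is induced, the designated ``escape'' vertex is genuinely available at each step, so the burning front never splits, and once it reaches the low-degree vertex $u_\ell$ the fire dies; at most $\{u_0,u_1,u_2\}$ ever burn. (This fits the general principle that if one can find batches $P_1,\dots ,P_t$ of at most $k$ vertices with $\mathrm{dist}(v,P_j)\ge j$ whose deletion puts $v$ in a component of size $s$, then protecting $P_j$ at step $j$ yields $\sn_k(G,v)\ge n-s$; I would record this lemma first.)

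The heart of the proof is then to count tame vertices by discharging. Assign to each vertex the charge $\mu(v)=\deg(v)-\tau_k$, so $\sum_v\mu(v)=2m-\tau_k n\le -\eps n$. Let $B$ be the set of non-tame vertices of degree at most $k+1$; since every vertex of degree $\le k$ is tame, each $v\in B$ has degree exactly $k+1$, and, being non-tame, it has no neighbour of degree $\le k+1$, so all $k+1$ of its neighbours have degree $\ge k+2$. Now use the single rule: every vertex sends $\tfrac1{k+2}$ to each of its neighbours lying in $B$. Afterwards every $v\in B$ has charge $(k+1-\tau_k)+\tfrac{k+1}{k+2}=0$; a non-tame vertex $u$ of degree $k+2$ has at most one neighbour in $B$ (two adjacent ones are impossible since $B$-vertices border only vertices of degree $\ge k+2$, and two non-adjacent ones $v,v'$ would make $v,u,v'$ an induced path witnessing the tameness of $v$), so it keeps charge $\ge\tfrac1{k+2}-\tfrac1{k+2}=0$; and a non-tame vertex of degree $d\ge k+3$ loses at most $\tfrac{d}{k+2}$, ending with charge $\ge (d-\tau_k)-\tfrac{d}{k+2}=\tfrac{k+1}{k+2}(d-k-3)\ge 0$. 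Thus every non-tame vertex finishes with non-negative charge, so the entire deficit is carried by tame vertices; since each vertex finishes with charge at least $\deg(v)-\tau_k-\tfrac{\deg(v)}{k+2}\ge -\tau_k$, there must be at least $\eps n/\tau_k$ tame vertices. The value $\tau_k=k+2-\tfrac1{k+2}=\tfrac{(k+1)(k+3)}{k+2}$ is forced precisely because it makes all of these inequalities tight at a degree-$(k+2)$ vertex surrounded by $B$-vertices.

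Finally I would assemble: for $n\ge 5$,
\[
\rho_k(G)=\frac1{n^2}\sum_{v\in V}\sn_k(G,v)\ \ge\ \frac1{n^2}\cdot\frac{\eps n}{\tau_k}\cdot (n-3)\ \ge\ \frac1{n^2}\cdot\frac{\eps n}{\tau_k}\cdot\frac{2n}{5}\ =\ \frac{2\eps}{5\tau_k},
\]
and the finitely many graphs with $2\le n\le 4$ would be checked directly (sparsity makes $m$ tiny and $\rho_k(G)$ is then bounded below by an absolute constant exceeding $\tfrac{2\eps}{5\tau_k}$). The main obstacle is the discharging step: the notion of ``tame'' must be weak enough that a single depth-$2$ local rule certifies every non-tame vertex, yet strong enough to give the explicit strategy above, and the bookkeeping must balance at exactly $\tau_k$. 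The reason a depth-$2$ condition already suffices --- the sense in which $k\ge 2$ is genuinely easier than $k=1$ --- is that with one spare firefighter the front can be pinched off the moment the chase meets either a vertex of degree $\le k+1$ or a second burnt neighbour.
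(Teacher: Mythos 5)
Your proof is correct and follows essentially the same route as the paper: your ``tame'' vertices coincide (up to a harmless slight generalization) with the paper's $V_1\cup V_2\cup V_3$, the chase-and-pinch firefighting strategy is the same, and the discharging rule (degree-$\ge k+2$ vertices sending $\tfrac1{k+2}$ to bad degree-$(k+1)$ neighbours) with the consequent count $\ge \eps n/\tau_k$ matches the paper's argument, differing only in bookkeeping ($\mu=\deg-\tau_k$ versus $\omega=\deg$). The final averaging step is also the same.
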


In order to show that this result is best possible, consider the family $\G(n,d,d+2)$ of bipartite $(d,d+2)$-regular graphs ($n \in \N$, $d \ge 3$). This family consists of all bipartite graphs with two parts $X$ and $Y$ such that $|X|=(d+2)n$ and $|Y|=dn$. Each vertex in $X$ has degree $d$, whereas vertices in $Y$ have degree $d+2$. Our next result refers to the probability space of random $(d,d+2)$-regular graphs with uniform distribution. We say that an event holds \emph{asymptotically almost surely (a.a.s.)} if the probability that it holds tends to one as $n$ goes to infinity. We show that for $G \in \G(n,k+1,k+3)$ we get that $\rho_k(G) = o(1)$ a.a.s.\  which implies that this family is $k$-flammable. Since the average degree of $G$ is (deterministically) 
$$
\frac {(k+1)n(k+3)+(k+3)n(k+1)}{(k+1)n+(k+3)n} = k+2 - \frac {1}{k+2},
$$
Theorem~\ref{thm:lower_bound} is sharp.

\begin{theorem}\label{thm:upper_bound}
Let $k \ge 2$, and let $G \in \G(n,k+1,k+3)$. Then, a.a.s.\ 
$$
\rho_k(G) = \Theta(\log n / n) = o(1).
$$
\end{theorem}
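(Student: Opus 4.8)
\medskip
\noin\emph{Plan of proof.} Throughout write $N:=|V(G)|=2(k+2)n=\Theta(n)$. I will use freely two facts which hold a.a.s.\ for a random biregular graph of minimum degree at least $3$, both obtained by routine first/second moment estimates in the pairing model (for the spectral form of the second, alternatively by standard eigenvalue bounds): $G$ has diameter $\Theta(\log n)$, and $G$ has linear vertex expansion, meaning that for some constant $c>0$ one has $|N(S)\setminus S|\ge c\cdot\min\{|S|,\,N-|S|\}$ for every $S$ with $1\le|S|\le N-1$. The lower bound $\rho_k(G)=\Omega(\log n/n)$ is the easy direction: for any $v$, let the firefighter spend its first $R:=\Theta(\log n)$ rounds (say one tenth of the eccentricity of $v$, which is $\Theta(\log n)$ a.a.s.) protecting, in round $j\le R$, some $k$ vertices at distance greater than $j$ from $v$; such vertices exist in abundance, have not yet ignited when protected, and remain saved, while the fire has not crossed the whole graph by round $R$ so the process is still running. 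Hence $\sn_k(G,v)\ge kR=\Omega(\log n)$ for every $v$, and $\rho_k(G)=\tfrac1{N^2}\sum_v\sn_k(G,v)=\Omega(\log n/n)$.

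\smallskip
\noin The content of the theorem is the upper bound $\rho_k(G)=O(\log n/n)$, which I will deduce from the claim that a.a.s.\ $\sn_k(G,v)=O(\log n)$ for \emph{every} $v$. Fix $v$ and any firefighter strategy; let $F_t,P_t$ be the burning and protected sets after round $t$, and let $T$ be the round at which the fire can no longer spread. Since the fire advances by at most one per round, $F_t\subseteq B(v,t)$, so every vertex that ignites does so by the round equal to its distance from $v$; hence $T\le\mathrm{diam}(G)=O(\log n)$ and $|P_\infty|=|P_T|\le kT=O(\log n)$. When the fire halts, every neighbour of $F_\infty$ outside $F_\infty$ is protected, that is $N(F_\infty)\setminus F_\infty\subseteq P_\infty$; together with vertex expansion this gives $c\cdot\min\{|F_\infty|,\,|U_\infty|\}\le|N(F_\infty)\setminus F_\infty|\le|P_\infty|=O(\log n)$, where $U_\infty:=V\setminus F_\infty$ is the set of saved vertices. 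So at least one of $|F_\infty|$ and $|U_\infty|$ is $O(\log n)$; if moreover $|F_\infty|\ge(\log n)^2$ for every $v$ and every strategy, then necessarily $|U_\infty|=\sn_k(G,v)=O(\log n)$, and summing over $v$ yields $\rho_k(G)=O(\log n/n)$.

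\smallskip
\noin Everything therefore reduces to the bootstrap bound ``$|F_\infty|\ge(\log n)^2$'', and this is the only place the hypothesis $k\ge2$ is used. It suffices to reach this size quickly --- I would show $|F_{r_0}|\ge(\log n)^2$ for some $r_0=O(\log\log n)$ --- by comparison with the firefighter process on the $(k+1,k+3)$-biregular tree $\mathcal T$ rooted at $v$. A.a.s.\ every ball $B(v,2r_0)$ is a tree or is unicyclic (the expected number of vertices $v$ whose $2r_0$-ball contains two independent cycles is $(\log n)^{O(1)}/n=o(1)$), so a spanning tree $T_v$ of $B(v,2r_0)$ is, up to a bounded number of missing branches, $\mathcal T$ truncated at depth $2r_0$; and for any common strategy the fire on the subgraph $T_v$ is contained in the fire on $G$, so it is enough to bound the tree-fire from below. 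On $\mathcal T$ the firefighter can never pinch the burning frontier down to a single thread: a frontier of $f\ge1$ vertices of degree $k+3$ has $f(k+2)$ children, at most $k$ of which are protected at that level, leaving a new frontier of size $\ge f(k+2)-k\ge2f$ (here $k\ge2$); and a frontier of $f\ge2$ vertices of degree $k+1$ has $fk$ children, leaving $\ge fk-k\ge f$. Hence after a couple of rounds the frontier multiplies by a factor bounded away from $1$ at every step, so $|F_t|$ grows geometrically and passes $(\log n)^2$ within $O(\log\log n)$ rounds.

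\smallskip
\noin I expect the bootstrap to be the only genuinely delicate part, with three points needing care. First, the ``at most $k$ protected at a given level'' bound is legitimate because on a tree one may assume the firefighter protects, in each round, only vertices that are about to ignite. Second, the additive ``$-kt$'' deficit (coming from $|P_t|\le kt$) must be kept negligible while the fire is still small, which is precisely why the explicit branching numbers $k$ and $k+2$ --- rather than the possibly tiny expansion constant $c$ --- have to drive this initial range. Third, for the $(\log n)^{O(1)}$ vertices lying near a short cycle a missing branch of $T_v$ could, in principle, let the firefighter contain the \emph{tree}-fire; for those one works with $G$ itself inside $B(v,2r_0)$ and uses that an extra edge only helps the fire, so that any budget the firefighter diverts onto a ``thin'' branch merely speeds up the main fire. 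The remaining ingredients --- the diameter bound, the expansion inequality, and the short passage from the bootstrap to the upper bound --- are routine.
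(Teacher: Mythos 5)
Your lower bound argument and your tree-bootstrap are fine in spirit, but there is a genuine gap in the upper bound, in the very first deduction: the claim $T\le\mathrm{diam}(G)$ is false. The containment $F_t\subseteq B(v,t)$ only says that a vertex igniting at round $t$ lies within distance $t$ of $v$, i.e.\ ignition round $\ge$ distance; it does \emph{not} say ignition round $\le$ distance. The fire only propagates along paths avoiding $P_t$, so a vertex at distance $d$ from $v$ can ignite at round $\gg d$ or never, and $T$ can a priori be far larger than $\mathrm{diam}(G)$. (For instance, nothing in your argument rules out the firefighter funnelling the fire so that the process runs $\Theta(n)$ rounds with $|P_\infty|=\Theta(n)$ saved vertices --- but ruling out $|P_\infty|=\Theta(n)$ is exactly what the theorem is asserting, so you cannot assume it.) Without $T=O(\log n)$ you do not get $|P_\infty|=O(\log n)$, and the terminal-state inequality $c\min\{|F_\infty|,|U_\infty|\}\le|N(F_\infty)|\le|P_\infty|$ only yields $\min\{|F_\infty|,|U_\infty|\}=O(T)$, which is circular.

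The paper avoids this by never shortcutting to the terminal state. It runs an iterative argument in two phases after the tree bootstrap: first, by the expansion lemma, the burning set $q_t$ satisfies $q_{t+1}\ge q_t(1+\eps'/2)$ as long as $q_t<n/2$, because the $\ge\eps' q_t$ vertices on the boundary outnumber the $\le t$ protected so far (here the already-established exponential growth keeps $t=o(q_t)$, the key self-reinforcing estimate); this puts $n/2$ vertices on fire by some $\hat T=O(\log n)$. Second, the saved set $p_t=n-q_t$ shrinks by a constant factor each subsequent round as long as $p_t\ge\frac{2(k+3)}{\eps'}t$, again by expansion and a degree bound, which forces $p_t=O(\log n)$ after $O(\log n)$ more rounds. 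It is this explicit tracking of the race between $q_t$ (or $p_t$) and $kt$ that makes the process provably short; your diameter shortcut skips precisely that race. Secondarily: requiring $\sn_k(G,v)=O(\log n)$ for \emph{every} $v$, including those on short cycles, is harder than needed and is where your bootstrap gets hand-wavy; the paper simply drops the $O(\log n)$ vertices lying on short cycles, which contribute only $O(\log n/n)$ to $\rho_k$, and proves the bound only for the tree-like vertices $U$.
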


\section{Sparse graphs are not $k$-flammable}

In this section, we prove Theorem~\ref{thm:lower_bound}. The main tool is the discharging method which is originated and commonly used in graph coloring problems. It is an obvious generalization of the result from~\cite{FWW} for $k=1$.

\begin{proof}[Proof of Theorem~\ref{thm:lower_bound}]
Let $k \in \N$, $\tau_k = k+2 - \frac{1}{k+2}$, and $\eps > 0$. Let $G=(V,E)$ be any graph on $n \ge 2$ vertices, $m$ edges, and with $\frac{2m}{n} \le \tau_k-\eps$.

Let $V_1 \subseteq V$ be the set of vertices of degree at most $k$. Let $V_2 \subseteq V \setminus V_1$ be the set of vertices of degree $k+1$ that are adjacent to at least one vertex of degree at most $k+1$. Finally, let $V_3 \subseteq V \setminus (V_1 \cup V_2)$ be the set of vertices of degree $k+1$ that are adjacent to at least one vertex of degree $k+2$ with at least two neighbours of degree $k+1$. The observation is that the fire can be easily controlled when the process starts at any vertex in $V_1 \cup V_2 \cup V_3$. Indeed, when the fire starts at $v \in V_1$, the firefighter can protect all neighbours of $v$ saving at least half of the vertices. When the fire starts at $v \in V_2$, then the firefighter protects all neighbours of $v$ but the vertex $u$ of degree at most $k+1$. The fire spreads to $u$ in the next round but it is stopped there. At least a $\frac {n-2}{n} \ge \frac {(2+k)-2}{2+k} \ge \frac 12$ fraction of vertices is saved. Similarly, when the fire starts at $v \in V_3$, the firefighter can direct the fire to vertex of degree $k+2$, then to another vertex of degree $k+1$, and finish the job there saving at least a $\frac {n-3}{n} \ge \frac {(3+k)-3}{3+k} \ge \frac 25$ fraction of the graph.

It remains to show that the fact that $G$ is sparse ($\frac{2m}{n} \le \tau_k-\eps$) implies that $V_1 \cup V_2 \cup V_3$ contains a positive fraction of all vertices. To show this, we use the discharging method mentioned earlier. To each vertex $v \in V \setminus (V_1 \cup V_2 \cup V_3)$, we assign an initial weight of $\omega(v) = \deg(v) \ge k+1$. Now, every vertex of degree at least $k+2$ gives $\frac {1}{k+2}$ to each of its neighbours of degree $k+1$ that is in $v \in V \setminus (V_1 \cup V_2 \cup V_3)$. Let $\omega'(v)$ be a new weight after this discharging operation. Clearly $\sum_{v \in V \setminus (V_1 \cup V_2 \cup V_3)} \omega(v) = \sum_{v \in V \setminus (V_1 \cup V_2 \cup V_3)} \omega'(v)$. For each vertex $v \in V \setminus (V_1 \cup V_2 \cup V_3)$ of degree $k+1$ we have 
$$
\omega'(v) = \omega(v) + (k+1) \frac {1}{k+2} = k+2 - \frac {1}{k+2} = \tau_k,
$$
since all neighbours of $v$ are of degree at least $k+2$ (otherwise, $v$ would be in $V_2$). For each vertex $v \in V \setminus (V_1 \cup V_2 \cup V_3)$ of degree $k+2$ we have
$$
\omega'(v) \ge \omega(v) - \frac {1}{k+2} = k+2 - \frac {1}{k+2} = \tau_k,
$$
since at most one neighbour of $v$ from $V \setminus (V_1 \cup V_2 \cup V_3)$ has degree $k+1$ (otherwise, all neighbours of $v$ of degree $k+1$ would be in $V_2 \cup V_3$ and so $v$ would not receive anything). Finally, each vertex $v \in V \setminus (V_1 \cup V_2 \cup V_3)$ of degree at least $k+3$ must have
$$
\omega'(v) \ge \omega(v) - \deg(v) \frac {1}{k+2} \ge \frac {(k+3)(k+1)}{k+2} = k+2 - \frac {1}{k+2}  = \tau_k.
$$
From this and the fact that $G$ is sparse it follows that
\begin{eqnarray*}
\left( \tau_k - \eps \right)n &\ge& 2m = \sum_{v \in V} \deg(v) \ge  \sum_{v \in V \setminus (V_1 \cup V_2 \cup V_3)} \deg(v) =  \sum_{v \in V \setminus (V_1 \cup V_2 \cup V_3)} \omega'(v) \\
&\ge&  \tau_k \left( n - |V_1| - |V_2| - |V_3| \right).
\end{eqnarray*}
Hence,
$$
|V_1| + |V_2| + |V_3| \ge \frac {\eps n}{\tau_k},
$$
which implies that with probability at least $\eps/\tau_k$ the fire starts on $V_1 \cup V_2 \cup V_3$ (since a fire breaks out at a random vertex of G). If this is the case, then we showed that at least a $2/5$ fraction of vertices can be saved, so $\rho_k(G) \ge (2/5) (\eps/\tau_k)$. This finishes the proof of the theorem.
\end{proof}

\section{$(k+1,k+3)$-regular graphs are $k$-flammable}

In this section, we prove Theorem~\ref{thm:upper_bound}. In order to do it, we need to investigate some properties of random $(d,d+2)$-regular graphs. Random $d$-regular graphs are well known and studied before. We know, for example, that a.a.s.\ almost all vertices have the property that the local neighbourhood of the vertex induces a tree, and that a random $d$-regular graph has good expansion properties. (For more information on this model, see for example~\cite{NW-survey} or any textbook on random graphs~\cite{AS, JLR}.) Random $(d,d+2)$-regular graphs have similar properties but, since no general result is known, we need to prove desired results from scratch.

Instead of working directly in the uniform probability space of
random $(d,d+2)$-regular graphs on $(2d+2)n$ vertices $\mathcal{G}(n,d,d+2)$, we use the \textit{pairing model}, which is described next. This model was first introduced by Bollob\'{a}s~\cite{bollobas2} to study random $d$-regular graphs. Consider $d(d+2)n$ points (forming set $P_X$) partitioned into $(d+2)n$ labeled buckets $x_1,x_2,\ldots,x_{(d+2)n}$ of $d$ points each. Another $d(d+2)n$ points (forming $P_Y$) are partitioned into $dn$ labeled buckets $y_1,y_2,\ldots,y_{dn}$, this time each consisting of $d+2$ points. A \textit{pairing} of these points is a perfect matching between $P_X$ and $P_Y$ into $d(d+2)n$ pairs. Given a pairing $P$, we may construct a multigraph $G(P)$, with loops allowed, as follows: the vertices are the buckets $x_1,x_2,\ldots, x_{(d+2)n}$ and $y_1,y_2,\ldots,y_{dn}$; a pair $\{x,y\}$ in $P$ ($x \in P_X$, $y \in P_Y$) corresponds to an edge $x_i y_j$ in $G(P)$ if $x$ and $y$ are contained in the buckets $x_i$ and $y_j$, respectively. 

It is an easy fact that the probability of a random pairing corresponding to a given simple graph $G$ is independent of the graph, hence the restriction of the probability space of random pairings to simple graphs is precisely $\mathcal{G}(n,d,d+2)$. Moreover, it can be shown (see Lemma~\ref{lem:simple}) that a random pairing generates a simple graph with probability asymptotic to $e^{-(d^2-1)/2}$ depending on $d$ but not on $n$. Hence, any event holding a.a.s.\ over the probability space of random pairings also holds a.a.s.\ over the corresponding space $\mathcal{G}(n,d,d+2)$. For this reason, asymptotic results over random pairings suffice for our purposes. One of the advantages of using this model is that the pairs may be chosen sequentially so that the next pair is chosen uniformly at random over the remaining (unchosen) points. 

\begin{lemma}\label{lem:simple}
Let $P$ be a random pairing. Then $G(P)$ is simple with probability tending to $e^{-(d^2-1)/2}$ as $n \to \infty$.
\end{lemma}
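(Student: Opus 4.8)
The plan is to compute the probability that a random pairing $P$ yields a simple graph by the first-moment/method-of-moments approach: show that the number of loops and the number of multiple edges are asymptotically independent Poisson random variables with constant means, and then the probability of no loops and no multiple edges converges to the product of the two Poisson point masses at zero. Concretely, I would let $L$ denote the number of loops and $D$ the number of pairs of parallel edges (double edges) in $G(P)$. Since a loop can only occur at a $Y$-bucket (buckets in $X$ have their $d$ points matched into $P_Y$, but two points in the same $X$-bucket cannot be matched to each other — wait, they can only be matched to $P_Y$, so loops at $X$-buckets are impossible; similarly loops at $Y$-buckets require two points of the same $Y$-bucket matched together, which is also impossible since the matching is between $P_X$ and $P_Y$). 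So in fact $G(P)$ has \emph{no loops at all}, and simplicity reduces to the absence of multiple edges. This is the key structural simplification coming from bipartiteness.

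So the real task is to compute $\Prob(D = 0)$ where $D$ counts unordered pairs of parallel edges joining the same $x_i$ to the same $y_j$. First I would compute $\E[X_2]$, where $X_2$ is the number of ordered configurations giving a double edge: choose a bucket $x_i$ (there are $(d+2)n$ of them), choose an ordered pair of its points (there are $d(d-1)$ ways), choose a bucket $y_j$ ($dn$ ways), choose an ordered pair of its points ($(d+2)(d+1)$ ways), and demand that the first $X$-point is matched to the first $Y$-point and the second to the second; the probability of two prescribed disjoint pairs both appearing in a random perfect matching on $d(d+2)n$ points each side is $\frac{1}{d(d+2)n} \cdot \frac{1}{d(d+2)n - 1}$. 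Multiplying and simplifying, the leading term is
$$
\E[X_2] \sim \frac{(d+2)n \cdot d(d-1) \cdot dn \cdot (d+2)(d+1)}{(d(d+2)n)^2} = \frac{(d-1)(d+1)}{1} \cdot \frac{1}{2} \cdot 2 = (d^2-1),
$$
so after accounting for the factor of $2$ from ordering the two parallel edges, $\E[D] \to (d^2-1)/2$. Then I would compute the factorial moments $\E[(D)_r]$ of $D$ and show $\E[(D)_r] \to \big((d^2-1)/2\big)^r$ for each fixed $r$; by the method of moments this gives $D \xrightarrow{d} \mathrm{Poisson}((d^2-1)/2)$, and in particular $\Prob(D = 0) \to e^{-(d^2-1)/2}$. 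Since $G(P)$ is automatically loopless, this is exactly the claimed probability of simplicity.

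The main obstacle will be the factorial-moment computation for $\E[(D)_r]$: one must count $r$-tuples of \emph{distinct} double edges, carefully handle the cases where several of these double edges share an $X$-bucket or a $Y$-bucket (these ``overlapping'' configurations must be shown to contribute only lower-order terms), and verify that the probability of $2r$ prescribed disjoint pairs all occurring in the random matching is $\big((d(d+2)n)(d(d+2)n-1)\cdots\big)^{-1} \sim (d(d+2)n)^{-2r}$, matching the count of non-overlapping configurations. This is the standard but slightly tedious bookkeeping; for a multigraph with only double-edge obstructions (triple edges etc.\ also being negligible to leading order, which I would note in passing) it goes through cleanly. I would keep the exposition brief, citing the analogous computation for random $d$-regular graphs (e.g.\ Bollob\'{a}s~\cite{bollobas2}, or the treatments in~\cite{NW-survey, JLR}) and indicating only the modifications forced by the bipartite $(d,d+2)$ degree sequence.
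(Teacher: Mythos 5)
Your proposal matches the paper's proof: both reduce simplicity to the absence of multiple edges, compute the expected number of double edges to be $(d^2-1)/2$, and invoke the method of moments to conclude Poisson convergence and hence $\Prob(\text{simple})\to e^{-(d^2-1)/2}$. Your explicit observation that the bipartite pairing makes loops impossible is a nice clarification (the paper's generic ``with loops allowed'' phrasing glosses over this), but it does not change the substance of the argument.
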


Since the well-known method of moments (see, for example~\cite{AS} for details) can be used to show the result, the proof is omitted here. 

\bigskip

Now, we observe that a.a.s.\ almost all vertices have the property that the local neighbourhood of the vertex induces a tree. This is, of course, not a good property for the firefighter, and it will be used to establish an upper bound for the threshold we investigate. For $d\ge 3,$ a cycle is called \emph{short} if it has length at most $L=\log_{d^2-1} \log n.$ Since the proof is analogous to the one for classic random $d$-regular graphs, it is omitted here.

\begin{lemma}\label{lem:local}
If $d \ge 3$ and $G \in \mathcal{G}(n,d,d+2)$, then a.a.s.\ the number of vertices that belong to a short cycle is at most $\log n$.
\end{lemma}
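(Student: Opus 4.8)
The plan is to bound the expected number of vertices lying on a short cycle by a first-moment (union bound) computation over the pairing model, and then apply Markov's inequality. Let $Z$ denote the number of vertices that belong to a cycle of length at most $L = \log_{d^2-1}\log n$. Since each short cycle has at most $2L$ vertices, it suffices to show that the expected number of short cycles is $o(\log n / L)$, or more crudely that $\E Z = o(\log n)$; in fact I expect to get $\E Z = O((\log n)^{c})$ for some $c<1$, or even $\E Z = O(\mathrm{polylog}\, n)$ with a small enough exponent, which is more than enough. Then $\Prob(Z > \log n) \le \E Z / \log n = o(1)$.

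The key steps, in order. First, fix an even length $2\ell$ with $2 \le 2\ell \le 2L$ (cycles in a bipartite graph are even, and a cycle of length $2\ell$ alternates between $\ell$ buckets in $X$ and $\ell$ buckets in $Y$). Count the number of potential cycles of length $2\ell$: choose an ordered sequence of $\ell$ distinct $X$-buckets and $\ell$ distinct $Y$-buckets interleaved, which is at most $((d+2)n)^\ell (dn)^\ell$ up to a constant accounting for rotations and reflections, and for each consecutive bucket pair choose which of the $d$ (resp. $d+2$) points in the bucket is used, contributing a factor at most $(d(d+2))^{2\ell}$. Second, bound the probability that a given such configuration is realised by the random pairing: the first chosen pair is present with probability $1/(d(d+2)n - 1)$, and conditioning sequentially, each subsequent prescribed pair is present with probability at most $1/(d(d+2)n - 2\ell)$, so the whole cycle appears with probability at most $(d(d+2)n - 2\ell)^{-\ell} = (1+o(1))(d(d+2)n)^{-\ell}$ uniformly for $\ell \le L$. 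Multiplying, the expected number of cycles of length exactly $2\ell$ is $O\big((d^2 (d+2)^2)^{2\ell} / (d(d+2))^{\ell}\big) = O\big((d(d+2))^{3\ell}\big)$; wait—more carefully, the bucket factors $n^\ell \cdot n^\ell$ cancel $n^{-\ell}$ only partially, so one gets $O(n^\ell)$, which is too big. The correct bookkeeping is the standard one for short cycles in regular graphs: the $n^\ell$ from $Y$-bucket choices is exactly cancelled by the $n^{-\ell}$ from the probability, leaving the expected number of $2\ell$-cycles as $\Theta\big(\frac{((d-1)(d+1))^\ell}{2\ell}\big)$ after the dust settles (the $d-1$, $d+1$ arising because after entering a bucket along one edge there are $d-1$ or $d+1$ remaining points to leave by). Third, sum over $\ell$ from $1$ to $L$: since $(d-1)(d+1) = d^2 - 1$, we get
$$
\sum_{\ell=1}^{L} \frac{(d^2-1)^\ell}{2\ell} \le \sum_{\ell=1}^{L} (d^2-1)^\ell \le 2(d^2-1)^L = 2\log n
$$
by the choice $L = \log_{d^2-1}\log n$. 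Hence the expected number of short cycles is $O(\log n)$, and the expected number of vertices on short cycles is $O(L \log n) = O(\log n \cdot \log\log\log n)$. Finally, apply Markov: $\Prob(Z > \log n)$ is then only $O(\log\log\log n)$, which is not $o(1)$.

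The fix—and this is the genuinely delicate point—is that the crude bound above overcounts badly: the dominant contribution to $\E Z$ comes from short cycles, and the number of short cycles is actually $O((d^2-1)^L) = O(\log n)$ only as a \emph{count of cycles}, but each such cycle has few vertices only for small $\ell$. The honest route is to choose $L$ slightly smaller, say $L = \tfrac{1}{2}\log_{d^2-1}\log n$, so that the expected number of short cycles is $O(\sqrt{\log n})$ and the expected number of vertices on them is $O(L\sqrt{\log n}) = o(\log n)$; then Markov gives $\Prob(Z > \log n) = o(1)$ as required. Alternatively one keeps $L = \log_{d^2-1}\log n$ but notes the length-$2\ell$ cycles with $\ell$ close to $L$ are the only problematic ones and that even they contribute only $O(\log n \cdot \log\log\log n / \log n) \to 0$ after dividing by $\log n$—so in fact Markov with threshold $\log n$ does close, provided one is careful that $\sum_{\ell \le L} (d^2-1)^\ell \cdot 2\ell \le 4L (d^2-1)^L = O(\log n \log\log\log n)$ and this is $o((\log n)^{1+\delta})$, so $\Prob(Z>\log n) \to 0$. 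The main obstacle, then, is purely the combinatorial bookkeeping in the second and third steps: getting the cancellation of the $n^\ell$ factor exactly right and keeping uniform control of the error terms over the whole range $\ell \le L$ (where $L \to \infty$ with $n$), so that the geometric sum really does telescope to something of order $(d^2-1)^L = \log n$ rather than blowing up. Everything else—bipartiteness forcing even cycles, the sequential exposure of pairs, Markov's inequality—is routine.
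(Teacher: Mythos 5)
The core of your approach --- a first-moment count of short cycles in the pairing model followed by Markov's inequality --- is sound and is in the same spirit as the paper's argument, although the paper phrases it differently: it grows a BFS tree to depth $\lceil L/2 \rceil$ from each vertex and bounds the expected number of ``bad'' (cycle-closing) edges encountered, rather than enumerating cycles globally. Both ultimately hinge on the quantity $(d^2-1)^{L/2} = \sqrt{\log n}$.

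Your proposal, however, contains a bookkeeping error that you mistook for a genuine obstacle. A short cycle has length at most $L$, so a cycle of length $2\ell$ is short precisely when $\ell \le L/2$, not $\ell \le L$. You wrote ``each short cycle has at most $2L$ vertices'' and summed $2\ell$ over $1 \le \ell \le L$. With the correct range, the expected number of vertices on short cycles is
$$
\E Z \le \sum_{\ell=1}^{\lfloor L/2 \rfloor} 2\ell \cdot \frac{(d^2-1)^\ell}{2\ell} \;=\; \sum_{\ell=1}^{\lfloor L/2 \rfloor} (d^2-1)^\ell \;=\; O\bigl((d^2-1)^{L/2}\bigr) \;=\; O\bigl(\sqrt{\log n}\bigr),
$$
and Markov then gives $\Prob(Z > \log n) = O(1/\sqrt{\log n}) = o(1)$, exactly as required and exactly matching the paper's bound. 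Once this is fixed there is no need for the ``fix'' you propose. Your first suggested fix (redefining $L$ as $\tfrac12\log_{d^2-1}\log n$) proves a different statement than the lemma, which pins $L$ down. Your second suggested fix is an arithmetic error: $\log n \cdot \log\log\log n / \log n = \log\log\log n \to \infty$, not $0$, so that version of the Markov bound does not close. In short, the method works; you just overshot the cycle-length range by a factor of two, and the ``delicate point'' you flagged is in fact immaterial once the definition of short is read correctly.
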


Now, we will move to more technical lemma showing that a.a.s.\ $\G(n,d,d+2)$ has good expansion properties. We start with investigating subsets of $X$ and subsets of $Y$ only (Lemma~\ref{lem:exp_one_side}), and then generalize it to any subset of $V = X \cup Y$ (Lemma~\ref{lem:exp}). Let $N(K)$ denote the set of vertices in $V \setminus K$ that have at least one neighbour in $K$.

\begin{lemma}\label{lem:exp_one_side}
Let $d \ge 3$, $\eps = 0.237$, and $G = (X,Y,E) \in \mathcal{G}(n,d,d+2)$. The following properties hold a.a.s.
\begin{itemize}
\item [(a)]For every $K \subseteq Y$ with $1 \le k = |K| \le \frac 12 |Y| = \frac {dn}{2}$, we have that 
$$
|N(K)| \ge k \frac {d+2}{d} (1+\eps).
$$
\item [(b)]For every $K \subseteq X$ with $1 \le k = |K| \le \frac 12 |X| = \frac {(d+2)n}{2}$, we have that 
$$
|N(K)| \ge k \frac {d}{d+2} (1+\eps).
$$
\end{itemize}
\end{lemma}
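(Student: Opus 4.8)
The plan is to establish both statements for the pairing model and then invoke Lemma~\ref{lem:simple} to transfer them to $\mathcal{G}(n,d,d+2)$. I describe part~(a); part~(b) is entirely symmetric, obtained by interchanging the roles of $d$ and $d+2$ (equivalently, of $X$ and $Y$), so I will not repeat it.

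Write $\sigma = \frac{(d+2)(1+\eps)}{d}$ with $\eps = 0.237$. Since every vertex of $X$ absorbs at most $d$ of the $(d+2)k$ edges leaving a set $K\subseteq Y$ with $|K|=k$, we always have $|N(K)| \ge \frac{d+2}{d}\,k$; hence (a) fails only if some $K$ with $|K|=k$ has $|N(K)| < \sigma k$. In that case choose $S\subseteq X$ with $N(K)\subseteq S$ and $|S| = s := \lceil \sigma k\rceil - 1$ (possible, since $|N(K)|\le s$ and $s<(d+2)n$); then every edge incident to $K$ is matched into a bucket of $S$. Call such a pair $(K,S)$ \emph{bad}. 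By the symmetry of the pairing model the $(d+2)k$ points lying in the buckets of $K$ are matched to a uniformly random $(d+2)k$-element subset of the $d(d+2)n$ points of $P_X$, and $(K,S)$ is bad precisely when this subset lands inside the $ds$ points lying in buckets of $S$; thus $\Prob((K,S)\textrm{ bad}) = \binom{ds}{(d+2)k}/\binom{d(d+2)n}{(d+2)k}$. By a union bound over $K$ and $S$ it therefore suffices to show that
$$
\sum_{k=1}^{dn/2} \binom{dn}{k}\binom{(d+2)n}{s}\,\binom{ds}{(d+2)k}/\binom{d(d+2)n}{(d+2)k} \;=\; o(1).
$$

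I would estimate this sum in two regimes. For $1\le k\le \delta n$, with a small absolute constant $\delta>0$, I would use the crude bound $\binom{ds}{(d+2)k}/\binom{d(d+2)n}{(d+2)k} \le \big(\tfrac{s}{(d+2)n}\big)^{(d+2)k}$ together with $\binom{dn}{k}\le (edn/k)^k$ and $\binom{(d+2)n}{s}\le(e(d+2)n/s)^s$; writing $x=k/n$, the $k$-th term is then at most $g(x)^k$ where $g(x)=c_d\, x^{d+1-\sigma}$ for a constant $c_d$ depending only on $d$. Since $d\ge 3$ and $\eps$ is small we have $d+1-\sigma>1$, so $g$ is increasing with $g(0^+)=0$; taking $\delta$ small enough that $g(\delta)<1$ and splitting once more at $k=\sqrt n$ gives $\sum_{k\le\delta n} g(x)^k = o(1)$. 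For $\delta n\le k\le dn/2$ I would instead keep the exact probability and apply Stirling's formula, which rewrites the $k$-th term as $\exp\!\big(n\,\phi_d(x)+O(\log n)\big)$, where, with $H(t)=-t\ln t-(1-t)\ln(1-t)$,
$$
\phi_d(x) \;=\; (d+2)\,H\!\Big(\frac{\sigma x}{d+2}\Big) + d\sigma x\,H\!\Big(\frac{1}{1+\eps}\Big) - d(d+1)\,H\!\Big(\frac{x}{d}\Big).
$$
It then remains to verify the analytic inequality $\phi_d(x)<0$ for all $x\in(0,d/2]$ and every integer $d\ge 3$: granting it, $\phi_d$ is bounded away from $0$ on the compact interval $[\delta,d/2]$, so this part of the sum is $\tfrac{dn}{2}\exp(-\Omega(n))=o(1)$.

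The heart of the argument — and the step I expect to be the main obstacle — is exactly this inequality $\phi_d(x)<0$ on $(0,d/2]$, which is where the value $\eps=0.237$ gets pinned down. Near $x=0$ it is automatic, because the leading term of $\phi_d$ is $-(d+1-\sigma)\,x\ln(1/x)<0$; the real work is the bulk of the interval, and it must be done uniformly over the infinitely many values of $d$. I would carry it out by expressing $\max_{x\in(0,d/2]}\phi_d(x)$ as an explicit one-variable function of $d$ (via $\partial\phi_d/\partial x$), confirming negativity for small $d$ by direct computation and controlling the behaviour as $d\to\infty$; the constant $0.237$ is calibrated so that this holds, and so that the analogous quantity for part~(b) — obtained by the substitution $d\leftrightarrow d+2$ and the range change to $x\le (d+2)/2$ — is likewise negative. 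Lemma~\ref{lem:simple} then transfers both a.a.s.\ conclusions to $\mathcal{G}(n,d,d+2)$.
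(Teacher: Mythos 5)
Your proposal is correct and follows essentially the same route as the paper: a first-moment/union-bound argument in the pairing model, split into a small-$k$ regime handled by crude binomial bounds and a linear regime handled by Stirling's formula, culminating in a numerical inequality (your $\phi_d(x)<0$ is, after the substitution $x=cd$, the same as the paper's condition $f(c,\eps,d)<1$). The only cosmetic differences are that you cover $N(K)$ by a set of size $\lceil\sigma k\rceil-1$ rather than exactly $\sigma k$, express the failure probability as a hypergeometric ratio of binomials rather than factorials, and parametrize the exponential rate via entropy functions; like the paper, you ultimately defer the numerical verification, though the paper does at least pin down the extremal case $d=3$, $c=\tfrac12$ and reports $f(\tfrac12,0.237,3)\approx 0.998<1<1.006\approx f(\tfrac12,0.238,3)$.
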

The Lemma implies that for any $K \subseteq X$ (or $K\subseteq Y$) with at most half of the vertices from $X$ (or $Y$), respectively, $N(K)$ contains substantially more points (in the pairing model) comparing to the number of points that are associated with $K$. Note also that the choice of $\eps$ is optimized for the best possible outcome, and is obtained numerically. A slightly smaller value could be obtained with less delicate, but analytical, argument.
\begin{proof}
We prove only part (a), leaving details in part (b) for the reader. Let  $K \subseteq Y$ with $1 \le k = |K| \le \frac 12 |Y| = \frac {dn}{2}$, and let $K' \subseteq X$ with $k'=|K'| = k \frac {d+2}{d} (1+\eps)$. Let $A(K,K')$ denote the event that all edges from $K$ go to $K'$; that is, $N(K) \subseteq K'$. Using the pairing model, we get that
\begin{eqnarray*}
\Prob ( A(K,K') ) &=& {k' d \choose k (d+2)} (k(d+2))! \frac {(d(d+2)n-k(d+2))!}{(d(d+2)n)!} \\
&=& {k (d+2) (1+\eps) \choose k (d+2)} (k(d+2))! \frac {(d(d+2)n-k(d+2))!}{(d(d+2)n)!} \\
&=& \frac { (k (d+2) (1+\eps))! } { (\eps k (d+2))! } \frac {(d(d+2)n-k(d+2))!}{(d(d+2)n)!}.
\end{eqnarray*}
Using Stirling's formula ($z! = (1+o(1)) \sqrt{2\pi z} (z/e)^z$), we get
\begin{eqnarray}
\Prob ( A(K,K') ) &=& \Theta(1) \frac{ (k (d+2) (1+\eps))^{k (d+2) (1+\eps)} } { (\eps k (d+2))^{\eps k (d+2)} } \frac { (d(d+2)n-k(d+2))^{d(d+2)n-k(d+2)} } { (d(d+2)n)^{d(d+2)n} } \nonumber \\
&=& \Theta(1) \left( \frac {k}{dn} \right)^{k(d+2)} \left( \frac {(1+\eps)^{1+\eps}}{\eps^\eps} \right)^{k(d+2)} \left( 1 - \frac {k}{dn} \right)^{d(d+2)n-k(d+2)}. \label{eq:1}
\end{eqnarray}
Therefore, the expectation of $Z=Z(k)$, the number of pairs $(K,K')$ with $|K|=k$ and $|K'|=k'=k \frac {d+2}{d} (1+\eps)$  such that $A(K,K')$ holds, is
\begin{eqnarray}
\E Z &=& {dn \choose k}{(d+2)n \choose k \frac {d+2}{d} (1+\eps)} \cdot \Prob( A(K,K') ) \nonumber \\
&=& \Theta(k^{-1}) \frac { \left( \frac {dn}{k} \right)^k } { \left( 1- \frac {k}{dn} \right)^{dn-k}} \frac { \left( \frac {dn}{k(1+\eps)} \right)^{k \frac {d+2}{d} (1+\eps)} } { \left(1 - \frac {k(1+\eps)}{dn} \right)^{(d+2)n - k \frac {d+2}{d} (1+\eps)} }  \cdot \Prob( A(K,K') ). \label{eq:2}
\end{eqnarray}
Combining~(\ref{eq:1}) and~(\ref{eq:2}) together we get
\begin{align*}
\E Z = \Theta(k^{-1}) & \left( \frac {dn}{k} \right)^{-k(d+1-\frac {d+2}{d}(1+\eps))} 
\left( \frac {(1+\eps)^{1+\eps}}{\eps^\eps} \right)^{k(d+2)} (1+\eps)^{-k \frac {d+2}{d} (1+\eps)} \\
& \left( 1- \frac {k}{dn} \right)^{d(d+1)n-k(d+1)} \left(1 - \frac {k(1+\eps)}{dn} \right)^{-(d+2)n + k \frac {d+2}{d} (1+\eps)}
\end{align*}

If $k=o(n)$, then
\begin{eqnarray*}
\E Z &=& O(1) \left( \frac {dn}{k} \right)^{-k(d+1-\frac {d+2}{d}(1+\eps))} \left( \frac {(1+\eps)^{1+\eps}}{\eps^\eps} \right)^{k(d+2)} \exp \left( (1+o(1)) \frac {k(d+2)(1+\eps)}{d} \right) \\
&=& \left( \Theta \left( \frac {dn}{k} \right) \right)^{-k(d+1-\frac {d+2}{d}(1+\eps))} = o(n^{-1}),
\end{eqnarray*}
since $d+1-\frac {d+2}{d}(1+\eps) > 1$. On the other hand, if $k=(1+o(1))cdn$ with $c \in (0,\frac 12]$, then
$$
\E Z = O \left( \Big( (1+o(1)) f(c,\eps,d) \Big)^n \right), 
$$
where
\begin{align*}
f(c,\eps,d) = c^{cd(d+1-\frac {d+2}{d}(1+\eps))} 
&(1+\eps)^{(1+\eps)cd(d+2)(1-\frac {1}{d})} 
\eps^{-\eps cd(d+2)} \\
& \cdot (1-c)^{d(d+1)(1-c)} (1-c(1+\eps))^{-(d+2)(1-c(1+\eps))}.
\end{align*}
Hence, if for any $d \ge 3$ and $c \in (0, \frac 12]$ we have $f(c, \eps, d) < 1$, we get that $\E Z$ is tending to zero exponentially (and so $\E Z = o(n^{-1})$). Not surprisingly the best value of $\eps$ depends on $d$ and the extreme case is for $d=3$. One can check numerically that the desired condition is satisfied with $\eps=0.237$ ($f(\frac 12, \eps, 3)=0.998$)  but not with $\eps+0.001$ ($f(\frac 12, \eps+0.001, 3)=1.006$).

We proved that for any $1 \le k \le \frac {dn}{2}$, $\E Z(k) = o(n^{-1})$ and so $\sum_{k=1}^{dn/2} \E Z(k) = o(1)$. It follows from Markov's inequality that a.a.s.\ there is no pair $(K,K')$ such that $A(K,K')$ holds and the proof of part (a) is done.
\end{proof}

\begin{lemma}\label{lem:exp}
Let $d \ge 3$, $\eps' = 0.088$, and $G = (X,Y,E) \in \mathcal{G}(n,d,d+2)$. Then, a.a.s.\ for every $K \subseteq V=X \cup Y$ with $1 \le k = |K| \le \frac 12 |V| = \frac {1}{2} (|X|+|Y|) = (d+1)n$, we have that 
$$
|N(K)| \ge \eps' k.
$$
\end{lemma}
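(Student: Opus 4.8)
The plan is to bootstrap the one-sided expansion of Lemma~\ref{lem:exp_one_side} to an arbitrary subset of $V=X\cup Y$. Given $K$, write $K_X=K\cap X$ and $K_Y=K\cap Y$, and set $a=|K_X|$, $b=|K_Y|$, so $k=a+b\le (d+1)n$. Since $G$ is bipartite, a vertex of $Y$ has a neighbour in $K$ iff it has one in $K_X$, and a vertex of $X$ has a neighbour in $K$ iff it has one in $K_Y$; hence $N(K)\cap Y=N(K_X)\setminus K_Y$ and $N(K)\cap X=N(K_Y)\setminus K_X$, and these two sets are disjoint. Therefore
$$
|N(K)| \;=\; |N(K_X)\setminus K_Y| + |N(K_Y)\setminus K_X| \;\ge\; \big(|N(K_X)|-b\big) + \big(|N(K_Y)|-a\big),
$$
where each parenthesised term may of course be replaced by $0$ when it is negative. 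Thus the whole problem reduces to good lower bounds for $|N(K_X)|$ and $|N(K_Y)|$.

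First suppose $a\le |X|/2=(d+2)n/2$ and $b\le |Y|/2=dn/2$. Then Lemma~\ref{lem:exp_one_side} applies verbatim and gives $|N(K_X)|\ge \frac{d}{d+2}(1+\eps)\,a$ and $|N(K_Y)|\ge \frac{d+2}{d}(1+\eps)\,b$, where $\eps=0.237$. Substituting both into the displayed bound simultaneously is not enough, because the coefficient of $a$ one then obtains is negative for small $d$; instead I would keep the two separate estimates $|N(K)|\ge \frac{d}{d+2}(1+\eps)a-b$ and $|N(K)|\ge \frac{d+2}{d}(1+\eps)b-a$ and take the larger of them. An elementary computation shows that, whatever the ratio $a/b$, at least one of the two already exceeds $\eps'(a+b)$, the tight instance being $d=3$.

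It remains to treat the case where $K$ sits mostly on one side. If $a>(d+2)n/2$, then necessarily $b<dn/2$ since $a+b\le(d+1)n$ (the mirror case $b>dn/2$, which forces $a<(d+2)n/2$, is handled the same way with $X,Y$ and parts (a),(b) interchanged; the case $a>(d+2)n/2$ and $b>dn/2$ cannot occur). Now $K_X$ is too large for Lemma~\ref{lem:exp_one_side}(b), so I would bound $|N(K_X)|$ by complementing inside $Y$: the set $K^\ast=Y\setminus N(K_X)$ sends all its edges into $X\setminus K_X$, so an edge count gives $|K^\ast|\le\frac{d}{d+2}|X\setminus K_X|<dn/2$, whence Lemma~\ref{lem:exp_one_side}(a) applies to $K^\ast$ and yields $|X\setminus K_X|\ge |N(K^\ast)|\ge\frac{d+2}{d}(1+\eps)|K^\ast|$, that is, $|K^\ast|\le\frac{d}{(d+2)(1+\eps)}\big((d+2)n-a\big)$. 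Hence
$$
|N(K_X)| \;=\; dn-|K^\ast| \;\ge\; dn-\frac{d}{(d+2)(1+\eps)}\big((d+2)n-a\big),
$$
which is increasing in $a$. Feeding this, together with $|N(K_Y)|\ge\frac{d+2}{d}(1+\eps)b$ (legitimate since $b<dn/2$), into the first display and using $(d+2)n/2<a$ and $a+b\le(d+1)n$, one checks that $|N(K)|\ge\eps' k$ once more.

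The main obstacle is purely the bookkeeping: each of the cases above leaves a linear inequality in $a$, $b$ and $n$ whose truth depends on $d$, and the constant $\eps'=0.088$ is what one must pick to make all of them hold simultaneously for every $d\ge 3$. The concentrated case is the most demanding, since there one is forced through the lossier complementation estimate, and $d=3$ is again extremal; this is where the precise value $0.088$ comes from. Note that no new probabilistic ingredient is required: the statement is a deterministic consequence of Lemma~\ref{lem:exp_one_side}, so it holds a.a.s.\ simply because that lemma does.
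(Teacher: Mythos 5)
Your proof is correct and takes a genuinely different route from the paper's, so a comparison is worthwhile. The paper works with the \emph{closed} neighbourhood $N[K]$ and in each case bounds $|N[K]|/|K|$ using only the ``heavier'' side: when $(d+2)|K_Y|\le d|K_X|\le d(d+2)n/2$ it discards $K_Y$ entirely and applies Lemma~\ref{lem:exp_one_side}(b) to $K_X$, using $|K|\le(1+\tfrac{d}{d+2})|K_X|$; when $|K_X|>(d+2)n/2$ it replaces $K_X$ by a subset $K_X'$ of size exactly $(d+2)n/2$, again applies part~(b), and divides by $|K|\le(d+1)n$; the other two cases are symmetric. You instead start from the exact bipartite decomposition $|N(K)|=|N(K_X)\setminus K_Y|+|N(K_Y)\setminus K_X|\ge(|N(K_X)|-b)^++(|N(K_Y)|-a)^+$, which lets you \emph{add} the two one-sided contributions rather than discard one of them; and in the concentrated case you bound $|N(K_X)|$ via the complement $K^\ast=Y\setminus N(K_X)$ using an edge count plus Lemma~\ref{lem:exp_one_side}(a), instead of subsampling $K_X$. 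Both work. The paper's subsampling trick is noticeably cleaner and gives a sharper bound on $|N(K_X)|$ at the boundary $a=(d+2)n/2$ (directly $\frac{d}{d+2}(1+\eps)a$ versus your lossier $dn-\frac{d}{(d+2)(1+\eps)}((d+2)n-a)$), but your approach compensates by adding the $K_Y$ contribution, so the constants still close: tracking through the computation, your Case~2 at $d=3$ in fact tolerates $\eps'\lesssim 0.097$, while the paper's binding constraint (its Cases~1 and~2 at $d=3$) gives $\eps'\lesssim 0.089$; both exceed the stated $0.088$. Your observation that ``$a>(d+2)n/2$ and $b>dn/2$ cannot occur'' (since $a+b\le(d+1)n$) is the same exhaustiveness check the paper uses implicitly. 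The only thing I would flag is that the ``elementary computation'' and ``one checks'' steps do hide a fair amount of bookkeeping (in particular the sub-case split on whether $|N(K_Y)|-a$ is positive, and the verification for all $d\ge 3$ rather than just $d=3$); in a final write-up these should be spelled out as the paper does, but the claims are true.
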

\begin{proof}
First, note that $\eps' < \frac {3}{8} \eps$, where $\eps=0.237$ is defined in Lemma~\ref{lem:exp_one_side}. Since we aim for a statement that holds a.a.s., we can assume that properties (a) and (b) from Lemma~\ref{lem:exp_one_side} hold. Take $K \subseteq V$ with $1 \le |K| \le (d+1)n$. Let $K_X=K \cap X$ and $K_Y=K \cap Y$ be a partition of the set $K$. We will consider four cases depending on the relative sizes of $K_X$ and $K_Y$.

\emph{Case 1}. Suppose that $(d+2) |K_Y| \le d |K_X| \le d(d+2)n/2$; that is, the number of points in the pairing model associated with $K_Y$ is at most the one associated with $K_X$. Let $N[K] = N(K) \cup K$ denote the closed neighbourhood of $K$. It follows from Lemma~\ref{lem:exp_one_side}(b) that
$$
|N[K]| \ge |N[K_X]| \ge |K_X| + (1+\eps) \frac {d}{d+2} |K_X|.
$$
Since
$$
|K|=|K_X|+|K_Y| \le \left(1+\frac {d}{d+2} \right) |K_X|,
$$
we get that
$$
\frac {|N[K]|}{|K|} \ge \frac {1+(1+\eps) \frac {d}{d+2}} {1+\frac {d}{d+2} } \ge \frac {1+(1+\eps) \frac {3}{5}} {1+\frac {3}{5} } = 1 + \frac {3}{8} \eps > 1 + \eps'.
$$

\emph{Case 2}. Suppose that $|K_X| > (d+2)n/2$; that is, the number of points associated with $K_X$ is more than half of the points in $X$ (quarter of the points in $V$). Note that this implies that less than half of the points in $Y$ (again, quarter of the points in $V$) are associated with $K_Y$, and so $(d+2) |K_Y| < d |K_X|$.

Take any $K_X' \subseteq K_X$ such that $|K_X'| = \frac {(d+2)n}{2}$. Since
$$
|N[K]| \ge |N[K_X']| \ge |K_X'| + (1+\eps) \frac {d}{d+2} |K_X'| =  \left( \left(1+\frac {\eps}{2} \right)d+1\right) n
$$
by Lemma~\ref{lem:exp_one_side}(b), we get
$$
\frac {|N[K]|}{|K|} \ge \frac {\left( \left(1+\frac {\eps}{2} \right)d+1\right)} {d+1} \ge \frac {\left( \left(1+\frac {\eps}{2} \right)3+1\right)} {4} = 1 + \frac {3}{8} \eps > 1 + \eps'.
$$

The last two cases, Case 3 and 4, are symmetric to Case 1 and 2, respectively. 

\emph{Case 3}. Suppose that $d |K_X| \le (d+2) |K_Y| \le d(d+2)n/2$. This time we use Lemma~\ref{lem:exp_one_side}(a) to show that
$$
|N[K]| \ge |N[K_Y]| \ge |K_Y| + (1+\eps) \frac {d+2}{d} |K_Y|,
$$
and
$$
\frac {|N[K]|}{|K|} \ge \frac {1+(1+\eps) \frac {d+2}{d}} {1+\frac {d+2}{d} } \ge \frac {1+(1+\eps)} {1+1 } = 1 + \frac {1}{2} \eps > 1 + \eps'.
$$

\emph{Case 4}. Suppose that $|K_Y| > dn/2$, which implies that $d |K_X| < (d+2) |K_Y|$. Take any $K_Y' \subseteq K_Y$ such that $|K_Y'| = \frac {dn}{2}$. Since
$$
|N[K]| \ge |N[K_Y']| \ge |K_Y'| + (1+\eps) \frac {d+2}{d} |K_Y'| =  \left( \left(1+\frac {\eps}{2} \right)d+1 + \eps \right) n
$$
by Lemma~\ref{lem:exp_one_side}(a), we get
$$
\frac {|N[K]|}{|K|} \ge \frac {\left( \left(1+\frac {\eps}{2} \right)d+1+\eps \right)} {d+1} \ge 1 + \frac {1}{2} \eps > 1 + \eps',
$$
and the proof is complete.
\end{proof}

Finally, we are ready to prove Theorem~\ref{thm:upper_bound}.

\begin{proof}[Proof of Theorem~\ref{thm:upper_bound}]
Let $k \ge 2$, $\eps' = 0.088$ as in Lemma~\ref{lem:exp}, and $G = (X,Y,E) \in \mathcal{G}(n,k+1,k+3)$.  Let $U$ be the set of vertices that do \emph{not} belong to a cycle of length at most $L = \log_{k^2+2k} \log n$. Since we aim for a statement to hold a.a.s.\ we assume that the properties from Lemma~\ref{lem:local} and~\ref{lem:exp} hold.

It follows from Lemma~\ref{lem:local} (with $d=k+1$) that
$$
|U| \ge |V| - \log n = (2d+2)n - \log n.
$$
Since 
\begin{eqnarray*}
\rho_k(G) &=& \frac {1}{|V|^2} \sum_{v \in V} \sn_k(G,v) \\
&=& \frac {1}{|V|^2} \sum_{v \in U} \sn_k(G,v) + \frac {1}{|V|^2} \sum_{v \in V \setminus U}  \sn_k(G,v)\\
&=& (1+o(1)) \frac {1}{|U|} \sum_{v \in U} \frac {\sn_k(G,v)}{|V|} + O \left( \frac {|V \setminus U|}{|V|} \right) \\
&=& (1+o(1)) \frac {1}{|U|} \sum_{v \in U} \frac {\sn_k(G,v)}{|V|} + O \left( \frac {\log n}{n} \right),
\end{eqnarray*}
it is enough to show that $\sn_k(G,v)=\Theta(\log n)$ for every $v \in U$. Since $G$ is $(k+1,k+3)$-regular graph, it takes at least $\frac 12 \log_{k^2+2k} n - O(1)$ steps to discover $n/2$ vertices. The firefighter can clearly save $\Omega( \log n )$ vertices until this is done (deterministically). Hence, it remains to show that $\sn_k(G,v)=O(\log n)$ for every $v \in U$.

Let $v \in U$ and let $s_t$ denote the number of vertices that catch fire at time $t$. It is clear that in order to minimize $s_t$ during the first few steps of the process when the game is played on a tree, the firefighter should use a greedy strategy and protect any vertex adjacent to the fire. Suppose that $v \in U \cap X$; that is, $\deg(v)=k+1$. It is easy to see that $s_1=1$ (initial vertex $v$ is on fire), $s_2=1$ ($v$ has $k+1$ neighbours but only one catches fire, since $k$ of them are protected), $s_3=(k+2)-k=2$ (the neighbour of $v$ on fire has $k+2$ new neighbours but $k$ of them will be saved), and $s_4=2k-k = k$ (two vertices on fire have $2k$ neighbours but $k$ of them are saved). We get the following recurrence relation: $s_2=1$ and for any $r \in \N$,
$$
s_{2r+2} = s_{2r+1} k - k = (s_{2r} (k+2)-k)k-k.
$$
After solving this relation we get that for $r \in \N$
$$
s_{2r} = \frac {k-1}{k(k+2)(k^2+2k-1)} \Big(k(k+2) \Big)^r + \frac {k(k+1)}{k^2+2k-1}.
$$
In particular at time $T= 2 \lfloor L/4 \rfloor = \frac 12 \log_{k^2+2k} \log n + O(1)$ we get that $s_T = \Omega( \sqrt{ \log n})$. It is clear that the same bound holds for $v \in U \cap Y$; that is, when $\deg(v)=k+3$.

(Note that when $k=1$ and $v \in U \cap X$, we have that $s_{2s}=1$ and $s_{2s+1}=2$ for $s \in \N$, and a positive fraction of vertices can be saved. This is the reason why this construction cannot be used for $k=1$. In fact, it follows from Theorem~\ref{thm:main} that the threshold in this case is at $\frac {30}{11} \approx 2.7272$, not at $\tau_1 = 1+2-\frac {1}{1+2} = \frac {8}{3} \approx 2.6666$.)

From that point on, the number of vertices on fire is large (comparing to the number of firefighters introduced) so that the fire will be spreading very fast. Let $q_t = \sum_{r=1}^t s_t$ be the number of vertices on fire at time $t$; clearly $q_T \ge s_T = \Omega(\sqrt{\log n})$. We claim that for any $t \ge T$ we have that $q_t \ge n/2$ or $q_t \ge q_T (1+\eps'/2)^{t-T}$, and the proof is by induction. The statement clearly holds for $t=T$. For the inductive step, suppose that the statement holds for $t \ge T$. If $q_t \ge n/2$, then $q_{t+1} \ge q_t \ge n/2$ as well. Suppose then that $q_t < n/2$. It follows from Lemma~\ref{lem:exp} that at least $\eps' q_t$ vertices are \emph{not} on fire (including perhaps some protected vertices) but are adjacent to vertices on fire. Thus, by the inductive hypothesis, at least 
$$
\eps' q_t - t = (1-o(1)) \eps' q_t > \frac {\eps'}{2}  q_t
$$
new vertices are going to catch fire at the next round, so 
$$
q_{t+1} \ge q_t (1+\eps'/2) \ge q_T(1+\eps'/2)^{t+1-T},
$$ 
and the claim holds. Note that the claim implies that at time $\hat{T} \le \log_{1+\eps'/2} n = O(\log n)$ at least half of the vertices are on fire.

For $t \ge \hat{T}$, it is easier to focus on $P_t$, the set of vertices that are \emph{not} burning at time $t$ (including $t$ vertices protected by the firefighter till this point of the process). Let $p_t = |P_t| = n - q_t$. Note that for any $t \ge \hat{T}$, we have $p_t \le p_{\hat{T}} < n/2$. We will prove that if $p_t \ge \frac {2(k+3)}{\eps'} t$, then 
$$
p_{t+1} \le p_t \left( 1 - \frac {\eps'}{2(k+3)} \right),
$$
and this will finish the proof. Indeed, if this is true, then the inequality $p_t \ge \frac {2(k+3)}{\eps'} t$ must be false for 
$$
t \ge \bar{T} = \hat{T} + \log_{1/\left( 1 - \frac {\eps'}{2(k+3)} \right)} n = \log_{1+\eps'/2} n + \log_{1/\left( 1 - \frac {\eps'}{2(k+3)} \right)} n = O(\log n).
$$
Therefore, at most $p_{\bar{T}} < \frac {2(k+3)}{\eps'} \bar{T} = O(\log n)$ vertices can be saved.

We will prove the claim by induction. Suppose that $\frac {2(k+3)}{\eps'} t \le p_t < n/2$. Using Lemma~\ref{lem:exp} for the last time, we get that $|N(P_t)| \ge \eps' p_t$; that is, at least $\eps' p_t$ of burning vertices are adjacent to some vertex from $P_t$. Since the maximum degree of $G$ is $k+3$, this implies that at least $\eps' p_t / (k+3)$ vertices of $P_t$ are adjacent to the fire. Hence, at least 
$$
\frac {\eps' p_t}{k+3} - t \ge \frac {\eps' p_t}{2(k+3)}
$$
new vertices will catch the fire in the next round. The claim holds and the proof is finished.
\end{proof}

\section{Open Problems}

It would be nice to find the threshold for other families of graphs, including planar graphs. 

\begin{problem}
Determine the largest real number $M$ for which there exists a constant $c > 0$ such that for every $\eps > 0$, every planar graph with $n > 2$ vertices and average degree at most $M - \eps$ has $\rho(G) \geq c \cdot \eps$.
\end{problem}
It follows from Theorem~\ref{thm:main} and the fact that $\rho(K_{2,n})=o(1)$ that $\frac {30}{11} \le M \le 4$. One can generalize this question to any number of firefighters. The following result was proved in~\cite{planar}, which implies that all planar graphs are not $k$-flammable for $k \ge 4$. 

\begin{theorem}[\cite{planar}]
Assume 4 firefighters are given at the first step, and then 3 at each subsequent step. Then the firefighters have a strategy so that every planar graph has surviving rate at least $1/2712$.
\end{theorem}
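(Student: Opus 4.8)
The plan is to show that for a constant fraction of the start vertices $v$ the firefighters can permanently quarantine the fire inside a region of size at most $(1-c_0)n$, hence $\sn(G,v)\ge c_0 n$ for those $v$; summing, $\rho(G)=\frac1{n^2}\sum_v\sn(G,v)\ge\frac1{n^2}(c_1 n)(c_0 n)=c_0 c_1$ once $n$ is large, while for $n\le 2712$ one simply uses $\rho(G)\ge\frac1n\ge\frac1{2712}$ (every start saves at least one vertex: if $v$ has a neighbour the firefighters protect one in the first round, and if $v$ is isolated everything else is saved). So everything reduces to constructing a good strategy for a constant fraction of vertices, with the product $c_0 c_1$ kept above $1/2712$.

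Fix a start vertex $v$, run a breadth-first search from it, and let $N_t(v)$ be the set of vertices at distance exactly $t$. The bookkeeping fact behind the strategy is that under the $4$-then-$3$ rule the firefighters have completed $4+3(t-1)=3t+1$ protections before the fire can reach any vertex at distance $t$; therefore a vertex set $S$ can be fully protected in time — carrying out the protections in increasing order of distance from $v$ — as soon as $|S\cap N_{\le t}(v)|\le 3t+1$ for every $t\ge 0$. The strategy is to take for $S$ a \emph{transversal separator}: working in a plane embedding with the BFS tree $T$ rooted at $v$, one forms a separating cycle (or path with endpoints on the outer face) $C$ out of a bounded number of root paths of $T$ together with a few extra edges. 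Since root paths are shortest paths from $v$, each contributes at most one vertex to each $N_t(v)$, so $|C\cap N_{\le t}(v)|=O(t)$, which after a small adjustment near $v$ (start the paths at neighbours of $v$, so $v\notin C$) gives exactly $|C\cap N_{\le t}(v)|\le 3t+1$; and $C$ is chosen, via the Lipton--Tarjan planar separator machinery, so that $v$ lies in the part of size at most $(1-c_0)n$ while the complementary part has at least $c_0 n$ vertices. Protecting $C$ in time then confines the fire forever to the small part, and $\sn(G,v)\ge c_0 n$.

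It remains to show that only a bounded fraction of vertices are \emph{bad}, meaning that no such enclosing transversal separator exists around them (the centre of a large star is the prototypical example). One argues that being bad forces a rigid local configuration — morally, $v$ must be a strong cut vertex with almost all of $G$ crowded near it — and then counts such vertices by a discharging argument in the spirit of the proof of Theorem~\ref{thm:lower_bound}, now tuned to planarity: starting from $\sum_v\deg(v)=2m\le 6n-12$, one assigns each bad vertex its degree as charge, redistributes charge along edges, and shows that every bad vertex ends up with charge exceeding $6$ by a fixed positive amount, which forces the number of bad vertices to be at most $c'n$ with $c'<1$; one then takes $c_1=1-c'$.

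The hardest step is the barrier construction of the second paragraph: one must pin down the scale of $C$ so that simultaneously the timing inequality $|C\cap N_{\le t}(v)|\le 3t+1$ is met — the generous first round ($k=4$ rather than $k=3$) is what lets this hold at the smallest scales — and the saved part outside $C$ still retains a constant fraction of the vertices, uniformly over all planar graphs and over the constant fraction of admissible start vertices. Controlling the bad vertices by discharging with constants sharp enough that $c_0 c_1$ stays positive is the second delicate point, and optimizing all of these constants through the accompanying case analysis is what produces the explicit value $1/2712$.
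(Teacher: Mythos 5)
This theorem is not proved in the paper you are reading; it is only quoted from the reference \cite{planar} (Esperet, van den Heuvel, Maffray, Sipma), so there is no in-paper proof to compare against. Nonetheless, two things are worth saying: your proposed route is genuinely different from the one in \cite{planar}, and it has real gaps.

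On the difference in approach: the present paper remarks, right after stating the theorem, that the techniques of \cite{planar} are ``too local'' to push down to two firefighters. That is consistent with the known proof, which is a local/structural argument: one identifies a constant fraction of \emph{good} start vertices whose bounded-radius neighbourhood admits a small enclosing cut, and establishes that constant fraction by a discharging scheme tailored to planar configurations. Your proposal, by contrast, is global, resting on a Lipton--Tarjan-style planar separator built from BFS root paths. This is an attractive idea and, if it worked, would buy generality and a cleaner picture; but it is not what \cite{planar} does, and the explicit constant $1/2712$ is a fingerprint of a sharp, case-by-case discharging argument rather than a separator bound.

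On correctness, there are two substantial gaps. First, the transversal separator. A fundamental cycle in a BFS tree is two root paths plus one non-tree edge, contributing at most $2$ vertices per level, which comfortably meets your timing constraint $|C\cap N_{\le t}(v)|\le 3t+1$. But nothing guarantees that any \emph{single} fundamental cycle separates the graph into parts with $v$ on the side of size at most $(1-c_0)n$; the Lipton--Tarjan machinery achieves balance precisely by also using whole BFS levels as part of the separator, and a BFS level may contain $\Theta(\sqrt n)$ vertices at a single distance from $v$, which cannot all be protected in time. If instead you stitch together more than two root paths to repair balance, the per-level count exceeds $3$ and the timing inequality is lost. So the existence of a separator that is simultaneously balanced, enclosing, and timing-feasible is exactly the hard point, and the proposal assumes it rather than proving it. Second, the ``bad vertices'' bound is asserted, not argued. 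The claim that a bad vertex ``must be a strong cut vertex with almost all of $G$ crowded near it,'' followed by an unspecified discharging from $2m\le 6n-12$, is not a proof: one needs a precise definition of the local configurations that obstruct a separator, a verification that those are the \emph{only} obstructions, and a discharging rule with a checkable list of cases. That, in fact, is essentially the entire content of the theorem, and it is the part your write-up leaves open.

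The framing at the start (handle $n\le 2712$ by $\rho(G)\ge 1/n$, and reduce to finding $c_0,c_1$ with $c_0c_1\ge 1/2712$) is fine, and the timing computation $4+3(t-1)=3t+1$ is correct. But the two core steps --- constructing the enclosing separator under the timing constraint, and bounding the bad set --- are not established, so the proposal as written does not constitute a proof.
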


They conjectured that the proof of this theorem could be modified to prove that for some $\eps > 0$, every planar graph $G$ satisfies $\rho_3(G) > \eps$. This was recently shown in~\cite{Gordinowicz}. In fact, slightly stronger result was proved.

\begin{theorem}[\cite{Gordinowicz}]
Assume 3 firefighters are given at the first step, and then 2 at each subsequent step. Then the firefighters have a strategy so that every planar graph has surviving rate at least $2/21$.
\end{theorem}

It is conjectured that planar graphs are not $2$-flammable but our existing techniques are too local to show it. Therefore, it seems that the Question~1 generalized to $k \ge 2$ does not make sense (unless the conjecture is false).

\bigskip

The second question was asked in~\cite{FWW}.

\begin{problem}
Determine the least integer $g^*$ for which there exists a constant $0 < c < 1$ such that every planar graph $G$ with girth at least $g^*$ has $\rho(G) \ge c$.
\end{problem}
Note that a connected planar graph with $n$ vertices and girth $g$ can have at most $\frac {g}{g-2} (n-2)$ edges (see, for example,~\cite{FWW}). Thus, from Theorem~\ref{thm:main} it follows that $g^* \le 8$. It was shown in~\cite{WFW} that $g^* \le 7$. Using the fact that $\rho(K_{2,n})=o(1)$ one more time, we conclude that $5 \le g^* \le 7$.

\section{Appendix}

\begin{proof}[Proof of Lemma~\ref{lem:simple}]
Let $\lambda=(d^2-1)/2$. We will show that the number of multiple edges in $G(P)$, $Z$, converges in distribution to the independent Poisson distributed random variable $Po(\lambda)$, as $n \to \infty$. We will use the well-known method of moments (see, for example~\cite{AS} for details) to show the result. We investigate the first moment (expectation) only.

The number of possible (multiple) edges in $G(P)$ is $d(d+2)n^2$. Note that there are $(d(d+2)n)!$ possible pairings (fix positions for the points in $P_X$ arbitrarily, permute the points in $P_Y$, and connect corresponding points). The probability that there is a given multiple edge is equal to
$$
\frac { {d \choose 2}{d+2 \choose 2} 2 (d(d+2)n-2)!}{ (d(d+2)n)!} = (1+o(1)) \frac {(d-1)(d+1)}{2 d (d+2) n^2}.
$$
(After selecting two points at corresponding buckets, there are two ways to connect them. Remaining points can be paired arbitrarily.) Thus, we get that 
$$
\E Z = (1+o(1)) \frac {(d-1)(d+1)}{2} = (1+o(1)) \lambda.
$$ 
The conclusion is that $\Prob(Z = k) = \frac {\lambda^k}{k!} e^{-\lambda}$, and the result holds by taking $k=0$.
\end{proof}

\begin{proof}[Proof of Lemma~\ref{lem:local}]
A balanced $(d,d+2)$-regular tree contains $d$ vertices on the first level, $d(d+1)$ vertices on the second level, $d(d+1)(d-1)$ on the third, and so on.  Let $f_i$ denote the number of vertices in a balanced $(d,d+2)$-regular tree with $i$ levels; that is,
$$
f_i = 1 + d \sum_{j=0}^{i-1}(d-1)^{\lfloor j/2 \rfloor} (d+1)^{\lceil j/2 \rceil} = O \left( \big((d-1)(d+1) \big)^{i/2} \right).
$$
Note that the same (asymptotic) upper bound holds for a balanced $(d+2,d)$-regular tree starting with a vertex of degree $d+2$.  

Let $u \in V=V(G)$ and let $N_i(u)$ denote the set of vertices at distance at most $i$ from $u$. We will show that in the early stages of this process, the graphs grown from $u$ tend to be trees a.a.s.; hence, the number $n_i$ of elements in $N_i(u)$ is equal to $f_i$ a.a.s. In other words, if we expose the vertices at distance $1,2,\dots, i$ from $u$ step-by-step, then we have to avoid at step $j$ edges that induce cycles. That is, we wish not to find edges between any two vertices at distance $j$ from $u$ or edges that join any two vertices at distance $j$ to a same vertex at distance $j+1$ from $u$. We will refer to edges of this form as \emph{bad}. Note that the expected number of bad edges at step $i+1$ is equal to $O(n_i^2/n) = O(f_i^2/n)=O((d-1)^{i}(d+1)^{i}/n)$. (There are $O(n_i)$ edges created at this point; for a given edge the probability of being bad is $O(n_i/n)$.)  Therefore, the expected number of bad edges found up to step $i_1 = \lceil L/2 \rceil$ is equal to
$$
\sum_{j=0}^{i_1-1} O\big((d-1)^{j} (d+1)^{j} / n\big)  = O \big((d-1)^{i_1} (d+1)^{i_1} / n\big) = O \big((d^2-1)^{L/2} / n\big) = O\big( \sqrt{\log n} /n \big)\,.
$$
(Recall that $L=\log_{d^2-1} \log n$.) Hence, the expected number of vertices that belong to a cycle of length at most $L$ is $O(\sqrt{\log n})$ and the assertion follows from Markov's inequality.
\end{proof}

\begin{proof}[Proof of part (b) in Lemma~\ref{lem:exp_one_side}]
For part (b), we show that for any  $K \subseteq X$ with $1 \le k = |K| \le \frac 12 |X| = \frac {(d+2)n}{2}$, and $K' \subseteq Y$ with $k'=|K'| = k \frac {d}{d+2} (1+\eps)$, 
\begin{eqnarray*}
\Prob ( A(K,K') ) &=& {k' (d+2) \choose k d} (kd)! \frac {(d(d+2)n-kd)!}{(d(d+2)n)!} \\
&=& \Theta(1) \left( \frac {k}{(d+2)n} \right)^{kd} \left( \frac {(1+\eps)^{1+\eps}}{\eps^\eps} \right)^{kd} \left( 1 - \frac {k}{(d+2)n} \right)^{d(d+2)n-kd}.
\end{eqnarray*}
This time, the expectation of $Z'=Z'(k)$, the number of pairs $(K,K')$ with $|K|=k$ and $|K'|=k'=k \frac {d}{d+2} (1+\eps)$  such that $A(K,K')$ holds, is
\begin{align*}
\E Z' = \Theta(k^{-1}) & \left( \frac {(d+2)n}{k} \right)^{-k(d-1-\frac {d}{d+2}(1+\eps))} 
\left( \frac {(1+\eps)^{1+\eps}}{\eps^\eps} \right)^{kd} (1+\eps)^{-k \frac {d}{d+2} (1+\eps)} \\
& \left( 1- \frac {k}{(d+2)n} \right)^{(d-1)(d+2)n-k(d-1)} \left(1 - \frac {k(1+\eps)}{(d+2)n} \right)^{-dn + k \frac {d}{d+2} (1+\eps)}
\end{align*}

If $k=o(n)$, then
$$
\E Z' = \left( \Theta \left( \frac {dn}{k} \right) \right)^{-k(d-1-\frac {d}{d+2}(1+\eps))} = o(n^{-1}),
$$
since $d-1-\frac {d}{d+2}(1+\eps) > 1$. If $k=(1+o(1))c(d+2)n$ with $c \in (0,\frac 12]$, then
$$
\E Z' = O \left( \Big( (1+o(1)) g(c,\eps,d) \Big)^n \right),
$$
where
\begin{align*}
g(c,\eps,d) = c^{c(d+2)(d-1-\frac {d}{d+2}(1+\eps))} 
&(1+\eps)^{(1+\eps)cd(d+2)(1-\frac {1}{d+2})} 
\eps^{-\eps cd(d+2)} \\
& \cdot (1-c)^{(d-1)(d+2)(1-c)} (1-c(1+\eps))^{-d(1-c(1+\eps))}.
\end{align*}
The part (b) is finished, since for any $d \ge 3$ and $c \in (0, \frac 12]$ we have $g(c, \eps, d) < 1$. (In fact, this time we could use slightly larger value of $\eps$; that is, $\eps = 0.310$.)
\end{proof}

\end{document}